\newtheorem{thm}{Theorem}[section]
\newtheorem{prop}[thm]{Proposition}
\newtheorem{lem}[thm]{Lemma}
\newtheorem{cor}[thm]{Corollary}
\theoremstyle{definition}
\newtheorem{dfn}[thm]{Definition}
\theoremstyle{remark}
\newtheorem{eg}[thm]{Example}
\theoremstyle{remark}
\newtheorem{rem}[thm]{Remark}
\newcommand{\Hom}{{\rm Hom}}
\newcommand{\Z}{\mathbb{Z}}
\title{Generalization of neighborhood complexes}
\author{Takahiro Matsushita}
\begin{document}
\maketitle
\begin{abstract} 
We introduce the notion of r-neighborhood complex for a positive integer $r$, which is a natural generalization of Lov$\acute{\rm a}$sz' neighborhood complex. The topologies of these complexes give some obstructions of the existence of graph maps. We applied these complexes to prove the nonexistence of graph maps about Kneser graphs. We prove that the fundamental groups of $r$-neighborhood complexes are closely related to the $(2r)$-fundamental groups defined in \cite{Mat2}.

\end{abstract}

\section{Introduction}

Neighborhood complexes of graphs were originally defined by Lov$\acute{\rm a}$sz in \cite{Lov} in the context of graph coloring problem. He proved that the connectivity of neighborhood complexes give the lower bounds of chromatic numbers of graphs, and determined the chromatic numbers of Kneser graphs. After that, many people researched about neighborhood complexes and related cell complexes, see \cite{BK}, \cite{Cso}, \cite{Sch}, and \cite{Ziv} for example. In this paper, we introduce the $r$-neighborhood complex $\mathcal{N}_r(G)$, which is the natural generalization of the neighborhood complexes $\mathcal{N}(G)$, and show that we can obtain the obstructions of the existence of graph maps from it. The first main theorem is the following. The all necessary definitions will be given later in this paper.

\vspace{2mm}
\noindent {\bf Main Theorem 1.}  (Corollary 4.6) Let $n,k$ be positive integers and $r$ a nonnegative integer satisfying $k-1 = r(n-2k)$. If $G$ is a graph with $\sharp V(G) < \sharp V(K_{n,k})$ and the odd girth of $G$ is equal to $K_{n,k}$, then there are no graph maps from $K_{n,k}$ to $G$.\qed

\vspace{2mm}
In \cite{Mat1}, the author proved that the fundamental groups of neighborhood complexes are isomorphic to the subgroup of 2-fundamental groups of graphs, called the even part. In \cite{Mat2}, the author established the notion of $r$-fundamental groups. Then the following hold.

\vspace{2mm}
\noindent {\bf Main Theorem 2.} (Theorem 5.2) Let $r$ be a positive integer. Let $(G,v)$ be a based graph where $v$ is not isolated. Then there is a natural isomorphism $\pi_1(\mathcal{N}_r(G),v) \cong \pi_1^{2r}(G,v)_{ev}$. \qed

\vspace{2mm} From this theorem and combined with the result of \cite{Mat2}, we have the following theorem.

\vspace{2mm}
\noindent {\bf Main Theorem 3.} (Corollary 5.3) Let $G$ be a graph whose chromatic number is greater than $2$, and $r$ a nonnegative integer. If there is a graph map from $G$ to $C_m$, then $H_1(\mathcal{N}_{i}(G) ; \mathbb{Z})$ has $\Z$ as a direct summand for any $i \leq r$. \qed

\section{Definitions}

In this section, we fix some definitions and review some facts we need in this paper.

\vspace{2mm}
\noindent {\bf Graphs :} A {\it graph} is a pair $(V,E)$ where $V$ is a set and $E$ is a subset of $V \times V$ such that $(x,y) \in E$ implies $(y,x) \in E$. We do not assume that $V$ is finite. For a graph $G=(V,E)$, we write $V(G)$ for the set $V$ and called the {\it vertex set of $G$} and $E(G)$ for the set $E$. A graph map from $G$ to $H$ is a map $f:V(G) \rightarrow V(H)$ such that $(f\times f)(E(G)) \subset E(H)$. We write $C_n$ for the cycle graph with $n$-vertices. Namely, $V(C_n)= \Z /n\Z$ and $E(C_n) = \{ (i,i+1), (i+1,i) \; | \; i \in \Z /n\Z\}$. The odd girth of a graph $G$ is a number $\inf \{2m+1\; |$ There is a graph map $C_{2m+1} \rightarrow G\}$ and, in this paper, denoted by $g_0(G)$. Remark that $g_0(G) < g_0(H)$, there is no graph map from $G$ to $H$.

\vspace{2mm}
\noindent {\bf $\Z_2$-space :} To obtain the obstruction of the existence of graph maps, we need 1st Stiefel-Whitney classes of double covers. We review the basic properties of the 1st Stiefel-Whitney classes following \cite{Koz}. For further studies about the Borsuk-Ulam theorems and characteristic classes are found in \cite{Matousek}, \cite{MS}. For the definition and basic properties for singular homology and cohomology are found in \cite{Hat} or \cite{Spa}, for example.

 A {\it $\mathbb{Z}_2$-space} is a topological space $X$ with an involution on $X$. We write $\overline{X}$ for the orbit space of $X$. In this paper, we say that a $\mathbb{Z}_2$-space $X$ is {\it free} if the natural quotient map $X \rightarrow X/\mathbb{Z}_2$ is a double covering. For a free $\mathbb{Z}_2$-space $X$, we write $w_1(X) \in H^1(\overline{X};\mathbb{Z}_2)$ for the 1st Stiefel-Whitney class of the double covering $X \rightarrow \overline{X}$. $w_1(X)$ is natural with respect to $\mathbb{Z}_2$-maps.
 We define the $\mathbb{Z}_2$-height ${\rm ht}_{\Z_2}(X)$ of a free $\mathbb{Z}_2$-space $X$ by the number $\inf \{ n\geq 0 \; | \; w_1(X)^n \neq 0.\}$. Remark that if ${\rm ht}_{\mathbb{Z}_2}(X) > {\rm ht}_{\Z_2}(Y)$, then there are no $\Z_2$-maps from $X$ to $Y$.

\vspace{2mm}
\noindent {\bf Posets :} A {\it chain} of a poset $P$ is a subset $c$ of $P$ such that $c$ becomes totally ordered set with the ordering induced by $P$. The {\it order complex} of a poset $P$ is the simplicial complex $\Delta (P) = \{ c\subset P \; | \; c$ is a finite chain of $P.\}$. In this paper, we say that a poset map $f:P \rightarrow Q$ is a homotopy equivalence if the map $|\Delta (P)| \rightarrow |\Delta (Q)|$ is a homotopy equivalence.

\section{$r$-neighborhood complexes}

In this section, we give the definition of $r$-neighborhood complexes of graphs, and state the criterion to obtain the nonexistence of graph maps. The case $r=1$ is known as the neighborhood complex. The method to obtain obstructions is essentially the same of the case of neighborhood complexes given in \cite{BK}, although there are some improvements (see Remark 3.5).

Let $G$ be a graph. For $v \in V(G)$, we write $N(v)$ or $N^G (v)$ for the subset $\{ w \in V(G) \; | \; (v,w) \in E(G)\}$. We define the set $N_r(v)$ or $N_r^G(v)$ for a positive integer $r$ as follows. First put $N_0(v)= \{ v\}$. If $N_{r-1}$ is defined, then $N_r(v) = \bigcup_{w \in N_{r-1}(v)} N(w)$.

\begin{dfn}
Let $G$ be a graph and $r$ a positive integer. Then we write $\mathcal{N}_r(G)$ for the abstract simplicial complex $\mathcal{N}_r(G) = \{ \sigma \subset V(G) \; | \; \sharp \sigma < +\infty \textrm{ and there is $v\in V(G)$ with $\sigma \subset N_r(v).$}\} $ and called the {\it $r$-neighborhood complex of $G$}.
\end{dfn}

Remark that $\mathcal{N}_1(G)$ is known as the neighborhood complexes defined by Lov$\acute{\rm a}$sz

The next is the generalization of $\Hom (K_2,G)$.

\begin{dfn}
Let $G$ be a graph and $r$ a positive integer. Then we write $\mathcal{B}_r(G)$ as the poset $\{ (A,B) \; | \; A$ and $B$ are nonempty finite subsets of $V(G)$ and for each $x\in A$ and $y\in B$ we have that $y \in N_r(x).\}$. The ordering of $\mathcal{B}_r(G)$ is defined by that $(A,B) \leq (A',B')$ if and only if $A\subset A'$ and $B\subset B'$.
\end{dfn}

For a simplicial complex $K$, we write $FK$ for its face poset. $\mathcal{B}_r(G)$ is a subposet of $F\mathcal{N}_r(G) \times F\mathcal{N}_r(G)$.

\begin{lem}
The first projection $\mathcal{B}_r(G) \rightarrow F\mathcal{N}_r(G)$, $(A,B) \mapsto A$ is a homotopy equivalence.
\end{lem}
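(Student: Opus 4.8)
The plan is to verify the hypotheses of Quillen's fiber lemma (Theorem A for posets; see \cite{Koz}) for the projection $p \colon \mathcal{B}_r(G) \to F\mathcal{N}_r(G)$, $(A,B) \mapsto A$. The basic observation I would record first is that $N_r$ is symmetric: $y \in N_r(x)$ if and only if $x \in N_r(y)$. This follows by induction on $r$ from $N_r(v) = \bigcup_{w \in N_{r-1}(v)} N(w)$ and the symmetry of $N$, or more transparently by noting that $u \in N_r(v)$ exactly says there is a walk of length $r$ from $v$ to $u$, a condition unchanged under reversing the walk. Consequently, if $(A,B) \in \mathcal{B}_r(G)$ and we pick any $y \in B$, then $A \subset N_r(y)$, so $A$ is indeed a face of $\mathcal{N}_r(G)$ and $p$ is well-defined; moreover, for a fixed $A \in F\mathcal{N}_r(G)$ the relevant fiber is
$F_A := p^{-1}\big((F\mathcal{N}_r(G))_{\le A}\big) = \{(A',B') \in \mathcal{B}_r(G) \mid A' \subset A\}$,
since the side condition $A' \in F\mathcal{N}_r(G)$ holds automatically by the same symmetry argument.

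The heart of the proof is to show that each $F_A$ is contractible. Fix $v \in V(G)$ with $A \subset N_r(v)$, which exists precisely because $A$ is a face of $\mathcal{N}_r(G)$. Define poset maps $\alpha, \beta \colon F_A \to F_A$ by $\alpha(A',B') = (A', B' \cup \{v\})$ and $\beta(A',B') = (A', \{v\})$; both are well-defined, because for $x \in A' \subset A \subset N_r(v)$ symmetry gives $v \in N_r(x)$, and the condition $A' \subset A$ is left untouched. Since $\mathrm{id}_{F_A} \le \alpha$ and $\beta \le \alpha$ in the pointwise order on poset maps, all three induce homotopic maps on $|\Delta(F_A)|$, so $\mathrm{id}_{F_A} \simeq \beta$. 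But $\beta$ takes values in $S := \{(A',\{v\}) \mid \emptyset \neq A' \subset A\}$ and restricts to the identity on $S$, so $|\Delta(F_A)| \simeq |\Delta(S)|$; and $S$, ordered by inclusion of the first coordinate, has maximum element $(A,\{v\})$, hence $|\Delta(S)|$ is a cone and is contractible. Thus every fiber of $p$ over a principal lower set is contractible, and Quillen's fiber lemma yields that $p$ is a homotopy equivalence in the sense fixed in the paper.

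The step I expect to require the most care is exactly what forces this route through fibers rather than an explicit homotopy inverse: the definition of $\mathcal{B}_r(G)$ demands that $B$ be \emph{finite}, so the obvious candidate section $A \mapsto \big(A, \bigcap_{x \in A} N_r(x)\big)$ is illegal as soon as $G$ has a vertex of infinite degree, and there is no evident pair of $\le$-comparable poset maps realizing the equivalence globally. Everything else — the symmetry of $N_r$, the fact that $\le$-comparable poset maps are homotopic on order complexes, and the fiber lemma itself — is standard and available in \cite{Koz}.
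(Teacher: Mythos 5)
Your proof is correct and follows essentially the same route as the paper: both verify the hypotheses of Quillen's fiber lemma by showing the fibers of the projection over principal order ideals/filters are contractible via order homotopies. The only differences are cosmetic — you use the lower-set fibers $p^{-1}\big((F\mathcal{N}_r(G))_{\le A}\big)$ where the paper uses the upper-set fibers $p^{-1}\big(F\mathcal{N}_r(G)_{\ge \sigma}\big)$, and your zig-zag $\mathrm{id} \le \alpha \ge \beta$ onto the subposet with maximum $(A,\{v\})$ replaces the paper's descending closure operator followed by its auxiliary contractibility lemma (Lemma 3.4), which your argument renders unnecessary.
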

\begin{proof}
We write $p:\mathcal{B}_r(G) \rightarrow F\mathcal{N}_r(G)$ for the first projection. Let $\sigma \in F\mathcal{N}_r(G)$. Put $U_\sigma = p^{-1}(F\mathcal{N}_r(G)_{\geq \sigma})$. Namely $U_\sigma = \{ (\sigma ' ,\tau') \in \mathcal{B}_r(G)\; | \; \sigma \subset \sigma'\}$. Define the map $c:U_\sigma \rightarrow U_\sigma$ by $c(\sigma',\tau')= (\sigma ,\tau')$. Then $c$ is a descending closure operator of $U_\sigma$, and hence $c(U_\sigma)$ is a deformation retract of $U_\sigma$ (see \cite{Koz}). From the next lemma, we have that $c(U_\sigma)= \{ (\sigma,\tau') \in \mathcal{B}_r(G)  \}$ is contractible, and hence $U_\sigma$ is contractible. By Quillen's fiber lemma A (for its statement and its proof, see \cite{Bjo}, \cite{Qui1}, or the case of finite posets are found in \cite{Bar} and \cite{Koz}), we have that the first projection $p:\mathcal{B}_r(G) \rightarrow F\mathcal{N}_r(G)$ is a homotopy equivalence.
\end{proof}

\begin{lem}
Let $P$ be a poset and suppose there is $x_0 \in P$ such that for any $x \in P$, $P_{\geq x} \cap P_{\geq x_0}$ has the minimum. Then $P$ is contractible.
\end{lem}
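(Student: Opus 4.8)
The plan is to produce an explicit order-preserving self-map $f$ of $P$ that is pointwise comparable both to the identity and to a constant map, and then to invoke the standard fact that comparable poset maps induce homotopic maps on order complexes. This reduces contractibility of $|\Delta(P)|$ to the observation that a poset with a least element has contractible order complex.

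First I would define $f\colon P \to P$ by letting $f(x)$ be the minimum of $P_{\geq x}\cap P_{\geq x_0}$, which exists for every $x\in P$ precisely by the hypothesis. The point that needs checking is that $f$ is order-preserving: if $x \leq y$, then $P_{\geq y}\cap P_{\geq x_0} \subseteq P_{\geq x}\cap P_{\geq x_0}$, and since $f(y)$ lies in the smaller set while $f(x)$ is the minimum of the larger one, we get $f(x)\leq f(y)$. I would also record two elementary facts: $f(x)\geq x$ for all $x$, so $f\geq \mathrm{id}_P$ pointwise; and $f(x)\geq x_0$ for all $x$, so $f$ takes values in the subposet $P_{\geq x_0}$.

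Next I would apply the standard lemma that if two poset maps $g,h\colon P \to Q$ satisfy $g(x)\leq h(x)$ for all $x$, then $|\Delta(g)|$ and $|\Delta(h)|$ are homotopic (see \cite{Koz}). From $\mathrm{id}_P \leq f$ this gives that $|\Delta(f)|$ is homotopic to the identity of $|\Delta(P)|$. On the other hand, $f$ factors through the inclusion $P_{\geq x_0}\hookrightarrow P$, and $P_{\geq x_0}$ has least element $x_0$, so $\Delta(P_{\geq x_0})$ is a cone and hence $|\Delta(P_{\geq x_0})|$ is contractible; therefore $|\Delta(f)|$ is null-homotopic. Combining the two statements, the identity of $|\Delta(P)|$ is null-homotopic, i.e. $P$ is contractible. (Alternatively, one can skip $P_{\geq x_0}$ and instead compare $f$ with the constant map at $x_0$, which is $\leq f$ pointwise, to conclude directly that $\mathrm{id}_{|\Delta(P)|}$ is homotopic to a constant map.)

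The argument is short, and I do not anticipate a real obstacle: the only step needing care is checking that $f$ is well-defined and monotone, and that is exactly the place where the hypothesis on $P_{\geq x}\cap P_{\geq x_0}$ is used; everything after that is routine poset topology.
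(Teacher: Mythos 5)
Your proof is correct and uses essentially the same argument as the paper: the map $f(x)=\min(P_{\geq x}\cap P_{\geq x_0})$ is exactly the map $c$ used there, and your appeal to the order homotopy lemma for comparable maps is just an unwinding of the paper's citation of the fact that an ascending closure operator gives a deformation retraction onto its image, which contains $x_0$ as minimum. No gaps; the monotonicity check and the two pointwise inequalities are precisely what is needed.
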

\begin{proof}
Let $c: P \rightarrow P$ be a map defined by $c(x)= \min (P_{\geq x_0} \cap P_{\geq x})$. Then $c$ is an ascending closure operator and hence $P$ and $c(P)$ have the same homotopy type. Since $c(P)$ has the minimum $x_0$, we have that $c(P)$ is contractible.
\end{proof}

\begin{rem} Suppose that the graph $G$ is finite. In this case, by the strong type of Quillen's theorem A proved in \cite{Bar} and the above proof of Lemma 3.3, we have that the first projection $\mathcal{B}_r(G) \rightarrow F\mathcal{N}_r(G)$ induces a simple homotopy equivalence. Hence the order complex of $\mathcal{B}_r(G)$ and the $r$-neighborhood complex $\mathcal{N}_r(G)$ have the same simple homotopy type. The case $r=1$ is already proved by Kozlov in \cite{Koz2}.
\end{rem}

\begin{rem} By the same argument of the proof of Lemma 3.3, we have the following somewhat interesting statement: {``\it Let $\varphi:P \rightarrow Q$ be an antitone map and $P_\varphi$ a subposet $\{ (x,y) \in P \times Q \; | \; y \leq \varphi(x)\}$ of $P \times Q$. Then the first projection $P_\varphi \rightarrow P$ induces a homotopy equivalence. Moreover, if $P$ and $Q$ are finite, then this is a simple homotopy equivalence.''} This is a strong version of Lemma 4.3 of \cite{Sch2}.
\end{rem}

$\mathcal{B}_r(G)$ has a natural involution $\tau : \mathcal{B}_r(G) \rightarrow \mathcal{B}_r(G)$, $(x,y) \mapsto (y,x)$. If $r$ is odd and the odd girth $g_0(G)$ is greater than $r$, then $\mathcal{B}_r(G)$ is a free $\mathbb{Z}_2$-space. The following theorem is the criterion to obtain the obstructions from the topologies of $\mathcal{N}_r(G)$.

\begin{thm} Let $r$ be a positive odd integer, $G$ and $H$ graphs whose odd girths are greater than $r$. If ${\rm ht}_{\mathbb{Z}_2}(\mathcal{B}_r(G)) > {\rm ht}_{\mathbb{Z}_2}(\mathcal{B}_r(H))$, then there is no graph map from $G$ to $H$.
\end{thm}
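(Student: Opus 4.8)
The plan is to reduce the statement about graph maps to a statement about $\mathbb{Z}_2$-maps between the free $\mathbb{Z}_2$-spaces $\mathcal{B}_r(G)$ and $\mathcal{B}_r(H)$, and then invoke the already-recorded fact that $\mathbb{Z}_2$-height is monotone under $\mathbb{Z}_2$-maps. Concretely, I would first check that $\mathcal{B}_r$ is functorial: given a graph map $f \colon G \to H$, the induced assignment $(A,B) \mapsto (f(A), f(B))$ should define a poset map $\mathcal{B}_r(G) \to \mathcal{B}_r(H)$. This requires the combinatorial observation that graph maps are compatible with iterated neighborhoods, i.e. $w \in N_r^G(v)$ implies $f(w) \in N_r^H(f(v))$; this follows by induction on $r$ from the definition $N_r(v) = \bigcup_{w \in N_{r-1}(v)} N(w)$ together with the defining property $(f \times f)(E(G)) \subset E(H)$ of a graph map. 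Monotonicity ($A \subset A'$, $B \subset B'$ imply $f(A) \subset f(A')$, $f(B) \subset f(B')$) and nonemptiness/finiteness of images are immediate, so $\mathcal{B}_r(f)$ is a well-defined poset map, hence induces a continuous map on order complexes.

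Next I would verify that this induced map is equivariant: since $\mathcal{B}_r(f)(A,B) = (f(A), f(B))$ and the involutions are $(A,B) \mapsto (B,A)$ on both sides, equivariance is formal. Combined with the hypothesis that $r$ is odd and the odd girths of $G$ and $H$ exceed $r$, the paragraph preceding the theorem tells us that $\mathcal{B}_r(G)$ and $\mathcal{B}_r(H)$ are free $\mathbb{Z}_2$-spaces (strictly speaking their geometric realizations of order complexes are), so we genuinely have a $\mathbb{Z}_2$-map between free $\mathbb{Z}_2$-spaces. Then by the remark in the \textbf{$\mathbb{Z}_2$-space} paragraph, if ${\rm ht}_{\mathbb{Z}_2}(\mathcal{B}_r(G)) > {\rm ht}_{\mathbb{Z}_2}(\mathcal{B}_r(H))$ there can be no $\mathbb{Z}_2$-map $\mathcal{B}_r(G) \to \mathcal{B}_r(H)$; contraposing, the existence of a graph map $G \to H$ would produce such a $\mathbb{Z}_2$-map and hence force ${\rm ht}_{\mathbb{Z}_2}(\mathcal{B}_r(G)) \leq {\rm ht}_{\mathbb{Z}_2}(\mathcal{B}_r(H))$, a contradiction. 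Therefore no graph map $G \to H$ exists.

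The one point that deserves genuine care — and which I expect to be the main obstacle — is the freeness claim: one must confirm that when $r$ is odd and $g_0(G) > r$, the involution $\tau$ on $\mathcal{B}_r(G)$ has no fixed points on the order complex, equivalently that $\tau$ fixes no chain setwise. A fixed point of $|\Delta(\mathcal{B}_r(G))|$ under $\tau$ would come from a chain invariant under $(A,B) \mapsto (B,A)$; the minimal such configuration would give an element $(A,B)$ with $A = B$, i.e. a vertex $x$ with $x \in N_r(x)$, which yields a closed walk of length $r$, hence (since $r$ is odd) a homomorphism from some odd cycle $C_{2m+1}$ with $2m+1 \leq r$ into $G$, contradicting $g_0(G) > r$. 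A careful treatment needs to rule out all invariant chains, not just singletons; since $\tau$ is an order-reversing-free... actually order-preserving involution, an invariant chain either has a fixed element or swaps elements in pairs, and in the latter case the midpoint of a swapped pair $(A,B) \le (A',B')$ with $(A',B') = \tau(A,B)$ still forces $A \subset B'= A$ wait — more carefully, an invariant finite chain with no fixed element has a pair $(A,B) < \tau(A,B) = (B,A)$, giving $A \subsetneq B$ and $B \subsetneq A$, impossible; so every invariant chain has a fixed element, reducing to the singleton case above. This is exactly the kind of argument the paper alludes to with the phrase "is a free $\mathbb{Z}_2$-space," so I would either cite it or spell out this short verification, and with freeness in hand the rest of the proof is the routine functoriality-plus-naturality argument sketched above.
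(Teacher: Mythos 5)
Your proposal is correct and is exactly the argument the paper intends: the paper states this theorem without a written proof, remarking only that the method is the same as for neighborhood complexes in Babson--Kozlov, and that method is precisely your functoriality-of-$\mathcal{B}_r$ plus equivariance plus monotonicity of $\mathbb{Z}_2$-height. Your verification of freeness (an order-preserving involution setwise fixing a chain must fix an element, forcing $A=B$ and hence an odd closed walk of length $r$, contradicting $g_0(G)>r$) correctly fills in the one detail the paper leaves to the phrase ``is a free $\mathbb{Z}_2$-space.''
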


For example, for a positive odd integer $r$, if $\mathcal{N}_r(H)$ and is $k$-connected and $g_0(G)> r$, and $\mathcal{N}_r(G)$ has the dimension smaller than $k$, then there is no graph map from $G$ to $H$.

\begin{eg}
Let $r$ be a positive integer. The first obvious example of $r$-neighborhood complex is $\mathcal{N}_r(C_{r+2})$. It is easy to see that for any $v \in V(C_{r+2})$, $N_{r}^{C_{r+2}}(v)= V(C_{r+2}) \setminus \{ v\}$. Hence $\mathcal{N}_r(C_{r+2})$ is homeomorphic to $S^r$. Hence ${\rm ht}_{\mathbb{Z}_2}(\mathcal{B}_r(C_{r+2}))= r.$ Hence for a graph $G$ with $g_0 (G)=r+2$, we have that ${\rm ht}_{\Z_2}(\mathcal{B}_r(G))\geq r$.
\end{eg}

\begin{eg}
Let $r$ be a positive integer and $m$ an odd integer greater than $2r$. Then $\mathcal{N}_r(C_m)$ is homotopy equivalent to $S^1$. Moreover $\mathcal{N}_r(C_m)$ collapses to the boundary of the $m$-gon. To prove this, we use discrete Morse theory. For studying discrete Morse theory, refer \cite{For} or \cite{Koz}, for example. The terminologies we use here is due to \cite{Koz}.

We prove this by induction on $r$. The case $r=1$ is obvious. Suppose $r\geq 2$. Then it is sufficient to prove that $\mathcal{N}_r(C_m)$ collapses to $\mathcal{N}_{r-1}(C_m)$ by the inductive hypothesis. Since $F \mathcal{N}_{r-1}(G)$ is downward closed in $F\mathcal{N}_r(G)$, it is sufficient to prove that we construct acyclic matching on $F\mathcal{N}_r(G) \setminus F\mathcal{N}_{r-1}(G)$ which has no critical points. 

For any $x \in \Z/m\Z$, we write $P_x$ for the poset $\{ \sigma \in F\mathcal{N}_r(C_m) \; | \; \{ x,x+2r\} \subset \sigma \}$. Then $F\mathcal{N}_{r}(C_m) \setminus F\mathcal{N}_{r-1}(C_m) = \coprod_{x \in \Z/m\Z} P_x$. The matching $M_x$ on $P_x$ defined as follows.

\begin{itemize}
\item $(\{ x,x+4, x+6, \cdots , x+2r\} , \{ x,x+2,\cdots,x+2r\}) \in M_x$.
\item Let $\sigma \in P_x$ such that $\sigma \neq \{ x,x+4, x+6, \cdots , x+2r\} , \{ x,x+2,\cdots,x+2r\}$. Put $d(\sigma)= \max \{ 2i\; | \; x+2i \not\in \sigma\}$. If $x+d(\sigma)-2 \in \sigma$, then $(\sigma \setminus \{ x+d(\sigma) -2\},\sigma) \in M_x$.
\end{itemize}
Then $M_x$ is a matching on $P_x$ having no critical points. We want to show that the matching $M_x$ is acyclic. We write $x<_{M_x} y$ for $(x,y) \in M_x$. A sequence of $a_0,\cdots ,a_n$ of points of $P_x$ is said to be admissible if for any $i \in \{ 1,\cdots ,n\}$, we have that $a_{i-1} > a_i$ or $a_{i-1} <_{M_x} a_i$.
\begin{itemize}
\item[(1)] If $\tau < \sigma$, then $d(\tau) \geq d(\sigma)$.
\item[(2)] If $(\tau,\sigma) \in M_x$, then $d(\tau)=d(\sigma)$.
\end{itemize}
Hence we have the following.
\begin{itemize}
\item[(3)] For $\sigma,\tau \in P_x$, if $d(\tau)>d(\sigma)$, then there is no admissible sequence from $\tau$ to $\sigma$.
\end{itemize}
By the definition of $M_x$, we have the following.
\begin{itemize}
\item[(4)] For $\tau,\sigma \in P_x$ and suppose there is $i$ with $2i < d(\sigma)-2$ and $x+2i \in \sigma \setminus \tau$. Then there is no admissible sequence from $\tau$ to $\sigma$.
\end{itemize}
By (3) and (4), we have the following.
\begin{itemize}
\item[(5)] Let $\sigma,\tau \in P_x$ with $\tau < \sigma$. If there is an admissible sequence from $\tau$ to $\sigma$, then $(\tau,\sigma) \in M_x$.
\end{itemize}
(5) implies that $M_x$ is acyclic. Indeed, if there is a admissible sequence
$$a_0 > b_0 <_{M_x} a_1 >b_1 <_{M_x} \cdots <_{M_x} a_{n} > b_{n} <_{M_x} a_0$$
Then for any $i \in \{ 0,1,\cdots ,n\}$, there is an admissible sequence from $b_i$ to $a_i$, and hence we have that $(b_i,a_i) \in M_x$. Since $(b_i,a_{i+1}) \in M_x$, we have that $a_0 = a_1 = \cdots =a_n$ and $b_0 =b_1 = \cdots =b_n$. Hence $M_x$ is acyclic. Therefore, we have that $M= \coprod_{x \in \Z/m\Z} M_x$ is an acyclic matching on $F\mathcal{N}_r(C_m) \setminus F\mathcal{N}_{r-1}(C_m)$.

This completes the proof of the fact $\mathcal{N}_r(C_m)$ collapses to the boundary of the $m$-gon if $r$ is a positive integer and $m$ is a positive odd integer greater than $2r$. Therefore, if a graph $G$ having a graph map from $G$ to $C_m$ where $m$ is an odd integer greater than $2r$, then ${\rm ht}_{\Z_2}(\mathcal{B}_r(G)) \leq 1$.
\end{eg}

\section{Kneser graphs}

In this section, we present an application of the theory of $r$-neighborhood complexes to Kneser graphs, and prove Main Theorem 1.

Recall that the Kneser graph $K_{n,k}$ for positive numbers $n$ and $k$ is a graph defined by $V(K_{n,k})= \{ A \subset [n] \;| \; \sharp A = k\}$ and $E(K_{n,k})= \{ (A,B) \; | \; A \cap B = \emptyset \}$. The Kneser conjecture is that the chromatic number of $K_{n,k}$ is $n-2k +2$ and was proved by Lovasz in \cite{Lov} using neighborhood complexes.

To prove Main Theorem 1, the following key observation is needed.

\begin{lem}
Let $A,B \in V(K_{n,k})$, $s$ a positive integer. Then the following are equivalent.
\begin{itemize}
\item[(1)] $B \in N_{2s}(A)$.
\item[(2)] $\sharp (A \setminus B) \leq s(n-2k)$.
\end{itemize}
\end{lem}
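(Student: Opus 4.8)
The plan is to prove the equivalence of (1) and (2) by induction on $s$, writing $m = n - 2k$ throughout. The engine of the induction is the identity $N_{2(s+1)}(A) = \bigcup_{C \in N_{2s}(A)} N_2(C)$, obtained by unwinding the recursion $N_r(v) = \bigcup_{w \in N_{r-1}(v)} N(w)$ twice; it reduces the whole problem to analyzing the single operator $N_2$.

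For the base case $s = 1$ I would compute $N_2(A)$ directly. Since $N_1(A) = N(A) = \{ C \in V(K_{n,k}) \mid A \cap C = \emptyset \}$, a vertex $B$ lies in $N_2(A)$ exactly when there is a $k$-subset $C \subset [n]$ disjoint from both $A$ and $B$, i.e.\ when $\sharp\bigl([n] \setminus (A \cup B)\bigr) \geq k$. Using $\sharp(A \cup B) = 2k - \sharp(A \cap B) = k + \sharp(A \setminus B)$, this rearranges to $\sharp(A \setminus B) \leq m$, which is precisely (2) for $s = 1$.

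For the inductive step, assume the equivalence for $s$. Combining the identity above with the inductive hypothesis applied to the pair $(A,C)$ and the base case applied to the pair $(C,B)$, the assertion $B \in N_{2(s+1)}(A)$ becomes equivalent to the existence of a $k$-set $C$ with $\sharp(A \setminus C) \leq sm$ and $\sharp(C \setminus B) \leq m$. The implication (1)$\Rightarrow$(2) is then immediate, since for such a $C$ one has $A \setminus B \subseteq (A \setminus C) \cup (C \setminus B)$ and hence $\sharp(A \setminus B) \leq sm + m = (s+1)m$. For the converse I must, given $\sharp(A \setminus B) \leq (s+1)m$, exhibit a witness $C$; here I may assume $m \geq 0$, as otherwise (2) is vacuous. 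Put $a = \sharp(A \setminus B) = \sharp(B \setminus A)$, so $\sharp(A \cap B) = k - a$, and let $C$ consist of all of $A \cap B$, together with some $\min(a,m)$ elements of $A \setminus B$ and some $\max(0, a - m)$ elements of $B \setminus A$. These are disjoint, their sizes are legitimate, and $\sharp C = (k - a) + \min(a,m) + \max(0, a - m) = k$. One then checks $\sharp(C \setminus B) = \min(a,m) \leq m$ and $\sharp(A \setminus C) = a - \min(a,m) = \max(0, a - m) \leq sm$, the final inequality being exactly the hypothesis $a \leq (s+1)m$. Thus $C$ witnesses $B \in N_{2(s+1)}(A)$, completing the induction.

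The step I expect to be the main obstacle is the construction of the witness $C$: it must simultaneously omit at most $sm$ elements of $A$ and contribute at most $m$ elements lying outside $B$, all while having exactly $k$ elements, and balancing these demands is what forces the $\min/\max$ bookkeeping above (equivalently, a short case split according to whether $a \leq sm$, where $C = B$ already works, or $sm < a \leq (s+1)m$, where one must trade some of $A \setminus B$ for some of $B \setminus A$). The reduction to $N_2$, the base case, and the forward implication are all routine cardinality computations.
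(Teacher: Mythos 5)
Your proof is correct and follows essentially the same route as the paper: induction on $s$, reduction to the single operator $N_2$ via a cardinality count for the base case, and construction of an intermediate vertex $C$ by trading elements of $A \setminus B$ for elements of $B \setminus A$ in the inductive step. If anything, your $\min/\max$ bookkeeping for the witness is more careful than the paper's, which swaps two-element subsets $X \subset A\setminus B$, $Y \subset B\setminus A$ and as written only checks out when $n-2k=2$.
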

\begin{proof}
We prove $(1) \Rightarrow (2)$. First we prove the case $s=1$. Since $B \in N_{2}(A)$, there is $C \in V(K_{n,k})$ such that $A \cap C = \emptyset$ and $B \cap C = \emptyset$. Hence we have that $\sharp (A \setminus B) \leq \sharp ([n] - (B \sqcup C)) = n-2k$. Next suppose $s >1$. Since $B \in N_{2s}(A)$, there is a sequence $A_0 ,\cdots ,A_s$ where $A_0 =A$ and $A_s =B$ and $A_i \in N_2(A_{i-1})$ for $i=1,\cdots ,s$. Then we have that $\sharp (A \setminus B) = \sum _{i=1}^n \sharp (A_{i-1} \setminus A_i) \leq s(n-2k)$.

Next we prove $(2) \Rightarrow (1)$. Since the case $n \leq 2k$ is trivial, we assume $n>2k$. We prove the induction on $s$. First we prove the case $s=1$. $\sharp (A \setminus B) \leq n-2k$ implies that $\sharp (A \cup B) = \sharp ((A \setminus B) \sqcup A) \leq n-k$. Hence there is a $k$-subset $C$ of $[n]$ such that $A \cap C = \emptyset$ and $B \cap C = \emptyset$. This implies $B \in N_2(A)$ in $K_{n,k}$. Next suppose $s>1$ and $\sharp (A \setminus B) \leq s(n-2k)$. By the inductive hypothesis, we can assume $(s-1)(n-2k) <\sharp (A \setminus B)$. Since $s>1$ and $n-2k > 0$, we have that $2 \leq \sharp (A \setminus B)$. Since $\sharp (A \setminus B) = \sharp (B \setminus A)$, there are two element subsets $X \subset A \setminus B$ and $Y \subset B \setminus A$. Put $C= (A \setminus X) \cup Y $. By the case $s=1$, we have that $C \in N_2(A)$. By the fact $\sharp (B \setminus C) \leq (s-1)(n-2k)$ and the inductive hypothesis,  we have that $B \in N_{2(s-1)}(C)$. Hence we have that $B \in N_{2s}(A)$.
\end{proof}

For a real number $x$, we write $\lceil x \rceil$ for the minimum integer which is not smaller than $x$. The following corollary is well-known. But we give the proof for convenient.

\begin{cor}
Let $n,k$ be positive integers with $n>2k$. Then we have that $g_0(K_{n,k}) = 2 \lceil \dfrac{k}{n-2k}\rceil +1$.
\end{cor}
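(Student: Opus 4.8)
The plan is to compute the odd girth directly from Lemma 4.5, which already translates the neighborhood relation $B \in N_{2s}(A)$ into the arithmetic condition $\sharp(A\setminus B) \le s(n-2k)$. Recall that $g_0(K_{n,k})$ is the infimum of $2m+1$ over all $m$ for which there is a graph map $C_{2m+1}\to K_{n,k}$; I first note the standard fact that, because $K_{n,k}$ is vertex-transitive (and more to the point, because cycles are ``closed walks''), a graph map $C_{2m+1}\to K_{n,k}$ exists if and only if there is a vertex $A$ and a closed walk of length $2m+1$ from $A$ to $A$, i.e.\ if and only if $A \in N_{2m+1}(A)$ for some vertex $A$. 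So I need to find the smallest odd integer $2m+1$ with $A \in N_{2m+1}(A)$ for some $A$.

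Next I would break this into the even and odd steps. Write $2m+1 = 2s+1$. Then $A \in N_{2s+1}(A)$ means there is a neighbor $B$ of $A$ (so $A\cap B=\emptyset$, forcing $\sharp(A\setminus B)=\sharp A = k$) with $B \in N_{2s}(A)$; by Lemma 4.5 the latter is equivalent to $k = \sharp(A\setminus B) \le s(n-2k)$. Conversely, if $s(n-2k)\ge k$, pick any $A$, pick $B$ disjoint from $A$ (possible since $n\ge 2k$), and then $\sharp(A\setminus B)=k\le s(n-2k)$ gives $B\in N_{2s}(A)$ by Lemma 4.5, hence $A \in N_{2s+1}(A)$. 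Therefore the smallest admissible $s$ is the least integer with $s \ge k/(n-2k)$, namely $s = \lceil k/(n-2k)\rceil$, and $g_0(K_{n,k}) = 2\lceil k/(n-2k)\rceil + 1$.

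The one point that deserves a careful word is the reduction in the first paragraph: a graph map $f\colon C_{2m+1}\to K_{n,k}$ is exactly a choice of vertices $f(0),f(1),\dots,f(2m)$ with consecutive ones adjacent and $f(2m)$ adjacent to $f(0)$, i.e.\ a closed walk of odd length $2m+1$; setting $A=f(0)$ this says precisely $A\in N_{2m+1}(A)$, and conversely such a closed walk gives back the graph map. I would also record that we may assume $n>2k$ throughout, as in the statement, so that $n-2k>0$ and the ceiling is well defined; the edge set of $K_{n,k}$ is nonempty in this range, so some adjacent pair $A,B$ exists. The main (and only real) obstacle is making sure Lemma 4.5 is being applied with the correct parity bookkeeping — that odd closed walks decompose as one edge step followed by an even walk — after which the computation is immediate.
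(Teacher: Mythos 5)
Your proposal is correct and follows essentially the same route as the paper: both directions reduce the existence of a graph map $C_{2m+1}\to K_{n,k}$ to the existence of disjoint $A,B$ with $B\in N_{2m}(A)$, and then invoke the lemma characterizing $N_{2s}$ to turn this into the inequality $k\le m(n-2k)$. Your explicit discussion of the closed-walk reduction is a harmless elaboration of a step the paper leaves implicit.
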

\begin{proof}
Suppose that there is a graph map $C_{2m+1} \rightarrow K_{n,k}$ where $m$ is a nonnegative integer. Then there are $A,B \in V(K_{n,k})$ such that $B \in N_{2m}(A)$ and $A \cap B = \emptyset$. Hence we have that $k = \sharp (A \setminus B) \leq m(n-2k)$. Therefore we have $m\geq \lceil k/(n-2k) \rceil$. Hence we have that $g_0(K_{n,k})\geq 2 \lceil k/(n-2k) \rceil +1$. Next suppose that $m = \lceil k/(n-2k) \rceil$ and let $A, B \in K_{n,k}$ with $A \cap B = \emptyset$. Since $\sharp (A \setminus B) =k \leq m(n-2k)$, we have that $B \in N_{2m}(A)$. Hence there is a graph map $C_{2m +1} \rightarrow K_{n,k}$. This completes the proof of $g_0(K_{n,k})= 2\lceil k/(n-2k)\rceil +1$.
\end{proof}

\begin{prop}
Let $n,k$ be positive integers and $r$ a nonnegative integer satisfying $n>2k$ and $k-1 = (n-2k)r$. Then $g_0(K_{n,k})= 2r+3$ and $N_{2r+1} (A) = V(K_{n,k}) \setminus \{ A\}$ for any $A \in V(K_{n,k})$.
\end{prop}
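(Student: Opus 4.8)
The plan is to handle the two assertions separately. The value of $g_0(K_{n,k})$ drops out of Corollary 4.3 by an arithmetic computation, and the identity $N_{2r+1}(A) = V(K_{n,k}) \setminus \{A\}$ follows from Lemma 4.2 together with an explicit construction of walks of length $2r+1$.

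For the odd girth, I would substitute the hypothesis, written as $k = (n-2k)r + 1$, into the formula $g_0(K_{n,k}) = 2\lceil k/(n-2k)\rceil + 1$ of Corollary 4.3. Then $k/(n-2k) = r + 1/(n-2k)$, and since $n > 2k$ forces $n - 2k \geq 1$, the term $1/(n-2k)$ lies in $(0,1]$; hence $\lceil k/(n-2k)\rceil = r+1$ and $g_0(K_{n,k}) = 2r+3$.

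For the set equality, the inclusion $N_{2r+1}(A) \subseteq V(K_{n,k})$ is automatic, so it remains to prove $A \notin N_{2r+1}(A)$ and $V(K_{n,k}) \setminus \{A\} \subseteq N_{2r+1}(A)$. For the first, recall that $N_{2r+1}(A) = \bigcup_{w \in N_{2r}(A)} N(w)$; so if $A \in N_{2r+1}(A)$ there is $A' \in N_{2r}(A)$ with $A \cap A' = \emptyset$, and Lemma 4.2 then forces $\sharp(A \setminus A') \leq r(n-2k) = k-1$, contradicting $\sharp(A \setminus A') = \sharp A = k$. (For $r = 0$, that is $k = 1$, the graph $K_{n,1}$ is complete and this is merely the absence of loops.) For the second inclusion, fix $B \neq A$. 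If $A \cap B = \emptyset$, then $(A,B) \in E(K_{n,k})$ and the walk alternating between $A$ and $B$ has odd length $2r+1$ and ends at $B$, so $B \in N_{2r+1}(A)$. If $A \cap B \neq \emptyset$, then $k \geq 2$, hence $r \geq 1$ (so that Lemma 4.2 applies); I would choose a $k$-element subset $C \subseteq [n] \setminus A$ meeting $B$, which is possible because $\sharp([n] \setminus A) = n - k \geq k$ and $B \setminus A \neq \emptyset$. Then $(A,C) \in E(K_{n,k})$, while $\sharp(C \setminus B) = k - \sharp(C \cap B) \leq k-1 = r(n-2k)$, so Lemma 4.2 gives $B \in N_{2r}(C)$; prepending the edge $A \to C$ to a walk of length $2r$ from $C$ to $B$ then exhibits $B \in N_{2r+1}(A)$.

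I do not expect a genuine obstacle: all the combinatorial substance is already isolated in Lemma 4.2, and what remains is parity bookkeeping plus the trivial degenerate case $r = 0$. The one point needing a little care is the choice of the set $C$ in the last case — it must avoid $A$ while still meeting $B$ — and this is exactly where the hypothesis $n > 2k$ is used.
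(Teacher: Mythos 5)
Your proof is correct and follows essentially the same route as the paper: the ceiling computation for the odd girth is identical, and your intermediate set $C$ (disjoint from $A$ and meeting $B$, so that $B \in N_{2r}(C)$ by the key lemma) is just the mirror image of the paper's choice (disjoint from $B$ and containing a point of $A$, so that $C \in N_{2r}(A)$ and $B \in N(C)$). You additionally spell out why $A \notin N_{2r+1}(A)$, including the degenerate case $r=0$, which the paper leaves implicit; that is a worthwhile extra check but not a different method.
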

\begin{proof} By the equation
$$g_0(K_{n,k}) = 2\lceil \frac{k}{n-2k} \rceil +1 = 2 \lceil r + \frac{1}{n-2k}\rceil +1 = 2(r+1)+1 =2r+3,$$
we have that $g_0(K_{n,k}) = 2r+3$. Let $A,B \in V(K_{n,k})$ with $A \neq B$, and $a \in A \setminus B$. Since $n\leq  2k+1$, there is a subset $C \subset [n]$ with $C \cap B = \emptyset$ and $a \in C$. Then we have that $\sharp (A \setminus C) \leq k-1 =r(n-2k)$. This implies $C \in N_{2r}(A)$, and hence we have that $B \in N_{2r+1}(A)$.
\end{proof}

\begin{cor}
Let $n,k$ be positive integers and $r$ a nonnegative integer satisfying that $n>2k$ and $k-1= (n-2k)r$. Then the $(2r+1)$-neighborhood complex $\mathcal{N}_{2r+1}(K_{n,k})$ is homeomorphic to the sphere whose dimension is equal to $\dbinom{n}{k} -2$.
\end{cor}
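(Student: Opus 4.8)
The plan is to deduce this directly from Proposition 4.5 together with the elementary observation that the simplicial complex consisting of all proper subsets of a finite set of cardinality $N$ is the boundary complex of the $(N-1)$-simplex.

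First I would invoke Proposition 4.5: under the hypotheses $n > 2k$ and $k - 1 = (n-2k)r$, we have $N_{2r+1}(A) = V(K_{n,k}) \setminus \{A\}$ for every $A \in V(K_{n,k})$. Next, since $\sharp V(K_{n,k}) = \binom{n}{k} < +\infty$, the finiteness condition appearing in the definition of $\mathcal{N}_{2r+1}$ is automatic, so by that definition a subset $\sigma \subset V(K_{n,k})$ is a face of $\mathcal{N}_{2r+1}(K_{n,k})$ if and only if there is $A$ with $\sigma \subset N_{2r+1}(A) = V(K_{n,k}) \setminus \{A\}$, i.e. if and only if $A \notin \sigma$ for some $A$, i.e. if and only if $\sigma \neq V(K_{n,k})$. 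Hence $\mathcal{N}_{2r+1}(K_{n,k})$ is precisely the collection of all proper subsets of $V(K_{n,k})$.

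Finally I would identify this complex: the collection of all proper subsets of a set with $N := \binom{n}{k}$ elements is the boundary $\partial \Delta^{N-1}$ of the $(N-1)$-dimensional simplex, whose geometric realization is homeomorphic to $S^{N-2}$. Since $n > 2k$ and $k$ is a positive integer we get $n \geq 3$, hence $N = \binom{n}{k} \geq 3$, so this sphere has nonnegative dimension and the conclusion $|\mathcal{N}_{2r+1}(K_{n,k})| \cong S^{\binom{n}{k}-2}$ follows.

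There is essentially no obstacle here: all the substance is contained in Proposition 4.5, and what remains is the standard recognition of the boundary of a simplex. The only points meriting a line of care are that the ambient vertex set is finite (so the finiteness clause in the definition of $\mathcal{N}_r$ imposes nothing) and a quick check that the dimension $\binom{n}{k}-2$ is nonnegative.
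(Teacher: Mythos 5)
Your argument is correct and is exactly the deduction the paper intends (the corollary is stated without proof as an immediate consequence of the proposition asserting $N_{2r+1}(A)=V(K_{n,k})\setminus\{A\}$, which is Proposition 4.3 in the paper's numbering, not 4.5): the faces are precisely the proper subsets of the vertex set, giving the boundary of a $\bigl(\binom{n}{k}-1\bigr)$-simplex. No issues.
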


\begin{rem}
For $n,k,r$ satisfying the above corollary, by Proposition 4.3, we can show that $\mathcal{B}_{2r+1}(K_{n,k})$ is $\mathbb{Z}_2$-homeomorphic to the sphere whose dimension is $\dbinom{n}{k}-2$ with the antipodal action. The proof of this fact is almost the same proof of ${\rm Hom}(K_2,K_n) \approx S^{n-2}$, which is found in \cite{BK}, for example.
\end{rem}

The following corollary includes the claim of Main Theorem 1.

\begin{cor}
Let $n,k$ be positive integers and $r$ a positive integer satisfying $k-1= r(n-2k)$. Let $G$ be a graph satisfying the followings:
\begin{itemize}
\item $g_0(G) > 2r+1$.
\item ${\rm ht}_{\Z_2} (\mathcal{B}_{2r+1}(G)) < \dbinom{n}{k}-2$.
\end{itemize}
Then there are no graph maps from $K_{n,k}$ to $G$. In particular, if $g_0(G)>2r+1$ and $\sharp V(G) < \sharp V(K_{n,k})$, then there are no graph maps from $K_{n,k}$ to $G$.
\end{cor}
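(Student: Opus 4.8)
The plan is to deduce the corollary from the criterion of Theorem 3.6, applied with the odd integer $2r+1$ to the pair $(K_{n,k},G)$, together with Remark 4.4 (or, as a substitute, Corollary 4.5 and Lemma 3.3) to pin down the $\mathbb{Z}_2$-height on the $K_{n,k}$ side. First dispose of the degenerate case: since $r\ge 1$ and $k-1=r(n-2k)$, we must have $n>2k$ unless $k=1$ and $n=2$, and in that exceptional case $\binom{n}{k}-2=0$, so no graph $G$ can satisfy ${\rm ht}_{\mathbb{Z}_2}(\mathcal{B}_{2r+1}(G))<0$, while $\sharp V(G)<2$ together with $g_0(G)>2r+1$ forces $G$ to be a single loopless vertex, which admits no graph map from the edge $K_{2,1}$; so the statement is then vacuously true. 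From now on $n>2k$, which makes Proposition 4.3, Corollary 4.5 and Remark 4.4 available.

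Next I would verify the hypotheses of Theorem 3.6 for $(K_{n,k},G)$ with $r$ there taken to be $2r+1$. By Proposition 4.3, $g_0(K_{n,k})=2r+3>2r+1$, and $g_0(G)>2r+1$ is assumed; since $2r+1$ is odd, both $\mathcal{B}_{2r+1}(K_{n,k})$ and $\mathcal{B}_{2r+1}(G)$ are then free $\mathbb{Z}_2$-spaces. It remains to establish ${\rm ht}_{\mathbb{Z}_2}(\mathcal{B}_{2r+1}(K_{n,k}))>{\rm ht}_{\mathbb{Z}_2}(\mathcal{B}_{2r+1}(G))$, and since the right-hand side is $<\binom{n}{k}-2$ by hypothesis, it suffices to show ${\rm ht}_{\mathbb{Z}_2}(\mathcal{B}_{2r+1}(K_{n,k}))\ge\binom{n}{k}-2$. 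This is immediate from Remark 4.4: $\mathcal{B}_{2r+1}(K_{n,k})$ is $\mathbb{Z}_2$-homeomorphic to $S^{\binom{n}{k}-2}$ with the antipodal action, whose orbit space is $\mathbb{RP}^{\binom{n}{k}-2}$, so $w_1^{\binom{n}{k}-2}\neq 0$. (If one wishes to avoid Remark 4.4: Corollary 4.5 and Lemma 3.3 show $|\Delta(\mathcal{B}_{2r+1}(K_{n,k}))|$ is homotopy equivalent to $S^{\binom{n}{k}-2}$, hence $(\binom{n}{k}-3)$-connected, and an $m$-connected connected free $\mathbb{Z}_2$-space has $\mathbb{Z}_2$-height at least $m+1$ by the Gysin sequence.) Thus Theorem 3.6 gives that there is no graph map $K_{n,k}\to G$.

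Finally, for the last sentence I would recover the hypotheses of the first part from $g_0(G)>2r+1$ and $\sharp V(G)<\sharp V(K_{n,k})=\binom{n}{k}$; concretely, only ${\rm ht}_{\mathbb{Z}_2}(\mathcal{B}_{2r+1}(G))<\binom{n}{k}-2$ needs checking. In any chain $(A_0,B_0)<\cdots<(A_\ell,B_\ell)$ of $\mathcal{B}_{2r+1}(G)$ the sets $A_\ell$ and $B_\ell$ are nonempty and disjoint — if they shared a vertex $x$ then $x\in N_{2r+1}(x)$, i.e., $G$ would have a closed walk of odd length $2r+1$ and hence an odd cycle of length $\le 2r+1$, contradicting $g_0(G)>2r+1$ — so $\ell\le \sharp A_\ell+\sharp B_\ell-2=\sharp(A_\ell\sqcup B_\ell)-2\le\sharp V(G)-2\le\binom{n}{k}-3$. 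Hence $\dim|\Delta(\mathcal{B}_{2r+1}(G))|\le\binom{n}{k}-3$, and its orbit space (a space covered by it) has no larger dimension, so $w_1(\mathcal{B}_{2r+1}(G))^m=0$ for all $m\ge\binom{n}{k}-2$; that is, ${\rm ht}_{\mathbb{Z}_2}(\mathcal{B}_{2r+1}(G))\le\binom{n}{k}-3<\binom{n}{k}-2$, as needed. The step to be most careful about is the equivariant one — that $\mathcal{B}_{2r+1}(K_{n,k})$ has the right $\mathbb{Z}_2$-height, not merely the homotopy type of the right sphere, the projection in Lemma 3.3 not being $\mathbb{Z}_2$-equivariant — together with the (elementary but essential) use of the odd-girth hypothesis to bound the dimension of $\mathcal{B}_{2r+1}(G)$ in the last clause.
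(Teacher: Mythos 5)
Your proposal is correct and follows exactly the route the paper intends (the paper states Corollary 4.6 without proof, immediately after assembling its ingredients): Proposition 4.3 and Remark 4.4 give ${\rm ht}_{\Z_2}(\mathcal{B}_{2r+1}(K_{n,k}))=\binom{n}{k}-2$, and Theorem 3.6 then yields the nonexistence of graph maps. The details you supply that the paper leaves implicit — the degenerate case $n=2k$, the disjointness of $A$ and $B$ forced by $g_0(G)>2r+1$, and the resulting chain-length bound $\dim\Delta(\mathcal{B}_{2r+1}(G))\le \sharp V(G)-2\le\binom{n}{k}-3$ for the ``in particular'' clause — are all correct and are indeed the points that need care.
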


\section{Relations with $r$-fundamental groups}

In this section, we investigate the relation between the $(2r)$-fundamental group and the fundamental groups of $r$-neighborhood complexes. The basic theory of $r$-covering maps and $r$-fundamental groups is constructed in \cite{Mat2}. The $r$-fundamental groups are closely related to the existence of a graph map to odd cycles. First we consider the relation with $r$-covering maps.

\begin{lem}
Let $p:G\rightarrow H$ be an $(2r)$-covering map. Then $\mathcal{N}_r(G) \rightarrow \mathcal{N}_r(H)$ is a covering map.
\end{lem}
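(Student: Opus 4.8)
The plan is to show that the simplicial map $q := \mathcal{N}_r(p)\colon \mathcal{N}_r(G) \to \mathcal{N}_r(H)$, $\sigma \mapsto p(\sigma)$, is a (combinatorial) covering; since $p$ is a graph map we have $p(\sigma) \subset N_r^H(p(w))$ whenever $\sigma \subset N_r^G(w)$, so $q$ is well defined. The two facts about a $(2r)$-covering map I would extract from \cite{Mat2} are that $p$ is surjective and that, for every $v \in V(G)$ and every $s$ with $0 \le s \le 2r$, the restriction of $p$ is a bijection $N_s^G(v) \to N_s^H(p(v))$ (concretely, for each $v$ every walk in $H$ of length at most $2r$ starting at $p(v)$ lifts uniquely to a walk in $G$ starting at $v$, and $p$ is injective on each $2r$-ball). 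I would then invoke the standard criterion that a surjective simplicial map $q\colon K \to L$ is a covering (equivalently, $|q|$ is a topological covering map) precisely when, for every vertex $v$ of $K$, $q$ restricts to an isomorphism of the closed star $\overline{{\rm st}}_K(v)$ onto $\overline{{\rm st}}_L(q(v))$. This forces $q$ to be non-degenerate, which here is anyway clear since each simplex of $\mathcal{N}_r(G)$ lies in some $N_r^G(w)$ on which $p$ is injective. So everything reduces to the star condition at each $v \in V(G)$.

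To check it, I would use $\overline{{\rm st}}_{\mathcal{N}_r(G)}(v) = \{\sigma : \sigma \cup \{v\} \in \mathcal{N}_r(G)\}$, which unwinds (via $v \in N_r^G(w) \Leftrightarrow w \in N_r^G(v)$) to $\bigcup_{w \in N_r^G(v)} \{\sigma \text{ finite} : \sigma \subset N_r^G(w)\}$; clearly $q$ carries this into $\overline{{\rm st}}_{\mathcal{N}_r(H)}(p(v))$. For surjectivity of $q|_{\overline{{\rm st}}(v)}$: given $\tau$ with $\tau \cup \{p(v)\} \subset N_r^H(w')$, take the unique $w \in N_r^G(v)$ with $p(w)=w'$ and lift each vertex of $\tau$ through the bijection $N_r^G(w) \to N_r^H(w')$, producing $\sigma \in \overline{{\rm st}}(v)$ with $p(\sigma) = \tau$. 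For injectivity: every $\sigma \in \overline{{\rm st}}_{\mathcal{N}_r(G)}(v)$ satisfies $\sigma \subset N_{2r}^G(v)$, since for $x \in \sigma$ a length-$r$ walk $v \to w$ followed by a length-$r$ walk $w \to x$ exhibits $x \in N_{2r}^G(v)$; as $p$ is injective on $N_{2r}^G(v)$, two simplices of the closed star with the same image coincide. Hence $q|_{\overline{{\rm st}}(v)}$ is a bijection on simplices, so an isomorphism onto $\overline{{\rm st}}(p(v))$. Together with surjectivity of $q$ itself (which follows from surjectivity of $p$ and the same lifting), the criterion gives the claim.

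The part needing the most care is lining up the two interfaces. First, one must isolate exactly which clause of the definition of $(2r)$-covering in \cite{Mat2} is being used and confirm it yields the bijections $N_s^G(v) \cong N_s^H(p(v))$ for all $s \le 2r$; the index $2r$ rather than $r$ is forced because the link of $v$ in $\mathcal{N}_r(G)$ — already its $1$-skeleton at $v$ — is governed by $N_{2r}^G(v)$, since a vertex $u$ is adjacent to $v$ in $\mathcal{N}_r(G)$ iff $N_r^G(u) \cap N_r^G(v) \neq \emptyset$ iff $u \in N_{2r}^G(v)$. Second, the star criterion must genuinely be verified at \emph{every} vertex: a non-degenerate simplicial map can be a local isomorphism near some vertices and still fail to be a covering (a fold of a path onto an edge is the model example), so the full strength of the $(2r)$-covering hypothesis — not merely an ordinary graph covering — is exactly what is consumed here. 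Degenerate cases, such as $v$ isolated in $G$ or $\mathcal{N}_r(G)$ being empty, are harmless and can be disposed of in a sentence.
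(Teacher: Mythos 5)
The paper itself gives no proof of this lemma, deferring to the (omitted) argument of Proposition 3.5 of \cite{Mat1}; your proof via the closed-star criterion for simplicial covering maps is exactly the standard argument being alluded to, and it is correct, including the key observation that the closed star of $v$ in $\mathcal{N}_r(G)$ has all its vertices in $N_{2r}^G(v)$. The only point to nail down is the one you already flag: that the definition of $(2r)$-covering in \cite{Mat2} really supplies bijections $N_s^G(v)\to N_s^H(p(v))$ for all $s\le 2r$ --- the case $s=2r$ is genuinely needed for injectivity on closed stars, since bijectivity only up to $s=r$ would not suffice (e.g.\ for $r=1$ the ordinary covering $C_8\to C_4$ is $N_1$-bijective but the induced map on neighborhood complexes folds a $4$-cycle onto a single edge).
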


The proof of this lemma is very similar of the proof of Proposition 3.5 of \cite{Mat1}, and is abbreviated.

The following theorem is Main Theorem 2. The case $r=1$ is already known in \cite{Mat1}.

\begin{thm}
Let $(G,v)$ be a based graph with $N(v) \neq \emptyset$. Then there is a natural isomorphism $\pi_1(\mathcal{N}_r(G),v) \cong \pi_1^{2r}(G,v)_{\rm ev}$.
\end{thm}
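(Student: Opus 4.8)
The plan is to build the isomorphism from the covering-space description of $\pi_1$ on both sides. On the topological side, $\pi_1(\mathcal{N}_r(G),v)$ classifies connected covering spaces of $\mathcal{N}_r(G)$; on the graph side, $\pi_1^{2r}(G,v)_{\rm ev}$ (as set up in \cite{Mat2}) is described via $(2r)$-covering maps of based graphs. Lemma 5.1 already supplies the bridge: a $(2r)$-covering $p:\tilde G \to G$ induces a genuine covering $\mathcal{N}_r(\tilde G) \to \mathcal{N}_r(G)$. So the first step is to upgrade Lemma 5.1 to an equivalence of categories between (connected, based) $(2r)$-coverings of $(G,v)$ and (connected, based) coverings of $\mathcal{N}_r(G)$ lying over the vertex $v$, by producing an inverse construction: given a covering $q:Y \to \mathcal{N}_r(G)$, one pulls back along the vertex set $V(G) \hookrightarrow \mathcal{N}_r(G)$ (the $0$-skeleton, which is all of $V(G)$ since $N(v)\neq\emptyset$ forces every vertex to lie in some $N_r(w)$... more carefully, one works with the vertices that are actually faces) to recover a covering of the discrete set $V(G)$, and then equips that set with the graph structure pulled back from $G$; one checks this graph is a $(2r)$-covering of $G$ using the combinatorial criterion for $(2r)$-covers from \cite{Mat2}, which is phrased precisely in terms of bijections on $N_r$-balls — exactly the data a covering of $\mathcal{N}_r(G)$ records on stars of vertices. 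This mutual inverse pair of functors, restricted to the connected based objects, induces a bijection of based covering classes, hence (by the classification of covering spaces, using that $|\mathcal{N}_r(G)|$ is locally nice) a bijection between conjugacy-quotient data for the two groups.

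The second step is to check this bijection is compatible with the group structure — i.e. that it comes from an actual isomorphism $\pi_1^{2r}(G,v)_{\rm ev} \to \pi_1(\mathcal{N}_r(G),v)$, not merely a set bijection of subgroup lattices. Here I would instead work directly with loops rather than coverings, to get the homomorphism on the nose. A based loop in $|\mathcal{N}_r(G)|$ is homotopic to an edge-path in the $1$-skeleton, i.e. a sequence $v=v_0, v_1, \dots, v_n=v$ of vertices with each $\{v_{i-1},v_i\}$ a $1$-simplex of $\mathcal{N}_r(G)$, meaning $v_{i-1},v_i$ both lie in $N_r(w_i)$ for some $w_i$. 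The "even part" of $\pi_1^{2r}(G,v)$ is generated by walks of even length in an auxiliary graph whose adjacency encodes "distance $\le 2r$ via a common midpoint" — and one sees that the condition $v_{i-1},v_i \in N_r(w_i)$ is exactly $v_{i-1}$ and $v_i$ are joined by a walk of length $\le 2r$ through $w_i$ (paths of length $r$ each way). So there is an evident map from edge-paths in $\mathcal{N}_r(G)^{(1)}$ to even $2r$-walks in $G$; I would show it is well-defined on homotopy/equivalence classes by checking it carries the two kinds of elementary moves on each side to the other (backtracking and triangle-fillings in $\mathcal{N}_r(G)$ correspond to the relations defining $\pi_1^{2r}$; here the fact that $2$-simplices of $\mathcal{N}_r(G)$ are triples in a common $N_r(w)$ is what makes triangle-filling legal on the graph side). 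Naturality in $(G,v)$ is then automatic since every construction is functorial in graph maps preserving the basepoint.

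The main obstacle I expect is the faithful bookkeeping in the second step: matching the generators-and-relations presentation of $\pi_1^{2r}(G,v)_{\rm ev}$ from \cite{Mat2} with the edge-path groupoid presentation of $\pi_1(\mathcal{N}_r(G),v)$, and in particular verifying that the relation set corresponds exactly — no relations are lost or added. The subtle points are (a) why exactly the \emph{even} part appears (it is the parity obstruction: a $1$-simplex $\{v_{i-1},v_i\}$ of $\mathcal{N}_r(G)$ records an \emph{unordered} pair in a common $N_r$-ball, which in graph terms is a walk whose length has no fixed parity but whose "$2r$-type" is even because it factors through a midpoint with two length-$\le r$ legs), and (b) that when $N(v) = \emptyset$ the vertex $v$ may fail to be a face of $\mathcal{N}_r(G)$, which is why that hypothesis is imposed; one should confirm $N(v)\neq\emptyset$ indeed guarantees $v \in N_r(v)$ (via $v \in N(w) \subset N_r(\cdot)$ for a neighbor $w$) so that the basepoint makes sense on both sides. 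Granting the combinatorial theory of \cite{Mat2} and the standard edge-path theorem, the argument is then a matter of carefully transporting one presentation to the other.
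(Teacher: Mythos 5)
Your second step is exactly the paper's proof: it identifies $\pi_1(\mathcal{N}_r(G),v)$ with the classical edge-path group, sends an edge $\{v_{i-1},v_i\}\subset N_r(w_i)$ to the length-$2r$ walk through the midpoint $w_i$ (and, inversely, samples an even-length loop of $G$ at multiples of $2r$), and checks that the elementary moves defining each equivalence relation correspond. The covering-space ``equivalence of categories'' in your first paragraph is a detour you rightly abandon as insufficient for the group isomorphism; the remainder, including your explanations of why the even part and the hypothesis $N(v)\neq\emptyset$ appear, matches the paper's argument.
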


Recall that $\pi_1^r(G,v)_{\rm ev}$ is the subgroup of $\pi_1^r(G,v)$ whose index is 1 or 2. Suppose $G$ is connected and $\chi (G) >2$. If there is a graph map from $G$ to $C_{2m+1}$, we have that there is a surjective graph homomorphism from $\pi_1^{2m}(G,v)$ to $\Z$ (see \cite{Mat2}). By the above theorem, we have the following.

\begin{cor}
Let $G$ be a graph with $\chi (G) >2$ and $r$ a nonnegative integer. If there is a graph map from $G$ to $C_m$, then $H_1(\mathcal{N}_{i}(G) ; \mathbb{Z})$ has $\Z$ as a direct summand for any $i \leq r$.
\end{cor}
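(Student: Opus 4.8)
The plan is to obtain the corollary formally from Theorem 5.2 together with the result of \cite{Mat2} recalled just before the statement: for a connected graph $H$ with $\chi(H)>2$, a graph map $H\to C_{2j+1}$ produces a surjective homomorphism $\pi_1^{2j}(H,v)\to\Z$. Fix an integer $i$ with $1\le i\le r$ (for $i=0$ the complex $\mathcal N_i$ is not defined and there is nothing to prove). I begin with a reduction. Since $\chi(G)>2$, the cycle $C_m$ is not bipartite, so $m$ is odd, and we assume $m\ge 2r+1$; then for each $i\le r$ there is a folding graph map $C_m\to C_{2i+1}$ (for odd $m'\ge m''$ one builds $C_{m'}\to C_{m''}$ by winding once around $C_{m''}$ and padding with $(m'-m'')/2$ back-and-forth steps), and composing with the given map gives a graph map $G\to C_{2i+1}$. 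Writing $G$ as the disjoint union of its connected components, $\chi(G)=\max_C\chi(C)$, so some component $G_0$ has $\chi(G_0)>2$; since $G_0$ then has an edge, choose $v\in V(G_0)$ with $N(v)\ne\emptyset$. Restricting $G\to C_{2i+1}$ to $G_0$ gives a graph map $G_0\to C_{2i+1}$.

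Applying the quoted consequence of \cite{Mat2} to $(G_0,v)$ (legitimate since $G_0$ is connected with $\chi(G_0)>2$) yields a surjection $\varphi\colon\pi_1^{2i}(G_0,v)\twoheadrightarrow\Z$. I then pass to the even part. As $\pi_1^{2i}(G_0,v)_{\rm ev}$ has index $1$ or $2$ in $\pi_1^{2i}(G_0,v)$, its image $\varphi\bigl(\pi_1^{2i}(G_0,v)_{\rm ev}\bigr)$ has index at most $2$ in $\Z$, so it equals $\Z$ or $2\Z$; in either case it is infinite cyclic, whence $\pi_1^{2i}(G_0,v)_{\rm ev}$ surjects onto $\Z$. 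By Theorem 5.2, applied with $r$ replaced by $i$ and $G$ by $G_0$ (valid because $N(v)\ne\emptyset$), there is an isomorphism $\pi_1(\mathcal N_i(G_0),v)\cong\pi_1^{2i}(G_0,v)_{\rm ev}$, so $\pi_1(\mathcal N_i(G_0),v)$ surjects onto $\Z$.

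It remains to pass to homology. A homomorphism to the abelian group $\Z$ factors through abelianization, and the abelianization of $\pi_1(\mathcal N_i(G_0),v)$ is $H_1(D;\Z)$, where $D$ is the path component of $v$ in $\mathcal N_i(G_0)$; hence $H_1(D;\Z)$ surjects onto $\Z$, and, $D$ being a connected component, so does $H_1(\mathcal N_i(G_0);\Z)$. Since every simplex of $\mathcal N_i(G)$ is contained in $N_i(w)$ for some $w$ and $N_i(w)$ lies in the $G$-component of $w$, the complex $\mathcal N_i(G)$ is the disjoint union, over the components $C$ of $G$, of the complexes $\mathcal N_i(C)$; thus $\mathcal N_i(G_0)$ is a union of connected components of $\mathcal N_i(G)$, $H_1(\mathcal N_i(G_0);\Z)$ is a direct summand of $H_1(\mathcal N_i(G);\Z)$, and the latter therefore surjects onto $\Z$. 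Finally, a surjection from an abelian group onto $\Z$ splits because $\Z$ is free, so $H_1(\mathcal N_i(G);\Z)\cong\Z\oplus K$ for some subgroup $K$, which is the assertion.

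The argument is essentially an assembly of Theorem 5.2 and the input from \cite{Mat2}; the one place that calls for a little care — and the step I would flag as the main obstacle — is making sure the surjection onto $\Z$ is not lost when restricting to the index-at-most-$2$ subgroup $\pi_1^{2i}(G_0,v)_{\rm ev}$ and to the path component of $v$, which reduces to the elementary observation that a finite-index subgroup of a group mapping onto $\Z$ still maps onto a finite-index, hence isomorphic, copy of $\Z$.
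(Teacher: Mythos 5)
Your proof is correct and follows exactly the route the paper intends: the paper leaves the corollary as an immediate consequence of Theorem 5.2 together with the quoted result of \cite{Mat2}, and you have simply supplied the routine details (reduction to a connected component, passage to the index-at-most-$2$ even part, abelianization, and splitting off $\Z$). You are also right to flag the implicit hypothesis that $m$ is odd with $m \geq 2r+1$; without it the statement fails (e.g.\ $G=C_3$, $m=3$, $r=2$ gives $\mathcal{N}_2(C_3)$ a full simplex), so your reading is the intended one.
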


To prove Theorem 5.2, first we review the edge path group of simplicial complexes. These constructions are classically known, and found in \cite{Spa} for example.

Let $(K,v)$ be a based simplicial complex. A finite sequence $(v_0,\cdots ,v_n)$ such that $\{ v_{i-1},v_i\} \in K$ for any $i= 1,\cdots ,n$ and $v_0=v =v_n$ is called an {\it edge loop}. We write $L(K,v)$ for the set of all edge loops of $(K,v)$. We write $\simeq$ for the equivalence relation generated by followings.
\begin{itemize}
\item[(1)] If there is $i \in \{ 1,\cdots ,n\}$ such that $v_i=v_{i-1}$, then $(v_0,\cdots ,v_n) \simeq (v_0,\cdots,v_{i-1},v_{i+1},\cdots, v_n)$.
\item[(2)] If there is $i \in \{ 1,\cdots ,n-1\}$ such that $\{ v_{i-1},v_i,v_{i+1}\} \in K$, then $(v_0,\cdots ,v_n) \simeq (v_0,\cdots ,v_{i-1},v_{i+1}, \cdots ,v_n)$.
\end{itemize}
For edge loops $\gamma= (v_0,\cdots ,v_n)$ and $\gamma'=(w_0,\cdots ,w_m)$ of $(K,v)$, we define the composition $\gamma' \cdot \gamma = (v_0,\cdots ,v_n,w_1,\cdots ,w_m)$. Define $E(K,v)$ for the quotient set $L(K,v)/\simeq$. Then $E(K,v)$ forms a group with composition of edge loops. We define the map $\overline{\Phi} :E(K,v)  \rightarrow \pi_1(|K|,v)$ as following. For $(v) \in L(K,v)$, we define $\Phi (v)$ by the constant loop $I \rightarrow |K|$, $t \mapsto v$ for $t \in I$. For $(v_0,\cdots ,v_n) \in L(K,v)$ for $n\geq 1$, we define $\Phi(v_0,\cdots ,v_n)$ by $\Phi(i/n)= v_i$ $(i=0,\cdots,n)$ and $\Phi((i-1+t)/n)= (1-t)v_{i-1}+tv_i \in |K|$ for $t \in I$. Then this induces a map $\overline{\Phi}: E(K,v) \rightarrow \pi_1(|K|,v)$, $[(v_0,\cdots ,v_n)] \mapsto [\Phi(v_0,\cdots ,v_n)]$. It is easy to see that $\overline{\Phi}$ is a group homomorphism and natural with respect to basepoint preserving simplicial maps. The following lemma is a classical result (see Theorem 16 and Corollary 17 of Section 3.6 in \cite{Spa}), and the proof is abbreviated.

\begin{lem}$\overline{\Phi} : E(K,v) \rightarrow \pi_1(|K|,v)$ is an isomorphism.
\end{lem}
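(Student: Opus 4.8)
The plan is to show that $\overline{\Phi}$ is bijective, since it is already a well-defined group homomorphism. \emph{Surjectivity:} given a loop $\alpha\colon I\to |K|$ with $\alpha(0)=\alpha(1)=v$, I would apply the Lebesgue number lemma to the open cover of $|K|$ by the open stars $\mathrm{st}(w)$, $w\in V(K)$, to obtain a partition $0=t_0<\cdots<t_n=1$ and vertices $u_1,\dots,u_n$ with $\alpha([t_{i-1},t_i])\subset\mathrm{st}(u_i)$. Since $\alpha(t_i)\in\mathrm{st}(u_i)\cap\mathrm{st}(u_{i+1})$, both $u_i$ and $u_{i+1}$ are vertices of the carrier simplex of $\alpha(t_i)$, so $\{u_i,u_{i+1}\}\in K$; and $\alpha(t_0)=\alpha(t_n)=v$ has carrier $\{v\}$, forcing $u_1=u_n=v$. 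Thus $\gamma=(v,u_1,\dots,u_n,v)$ is an edge loop, and because each open star $\mathrm{st}(u_i)$ is star-shaped with respect to $u_i$ and each carrier simplex is convex, the straight-line homotopies inside these sets assemble to a path homotopy $\alpha\simeq\Phi(\gamma)$ rel $\{0,1\}$, so $[\alpha]=\overline{\Phi}([\gamma])$.

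\emph{Injectivity:} it suffices to show that if $\gamma$ is an edge loop at $v$ with $\Phi(\gamma)$ nullhomotopic in $|K|$, then $[\gamma]=1$ in $E(K,v)$. I would choose a nullhomotopy $H\colon I\times I\to|K|$ that is constant equal to $v$ on the top edge and the two vertical edges and restricts to $\Phi(\gamma)$ on the bottom edge, subdivide $I\times I$ finely (refining, on the bottom edge, the subdivision of $I$ coming from $\gamma$), and apply the simplicial approximation theorem to get a triangulation $L$ of the square and a simplicial map $g\colon L\to K$ homotopic to $H$ rel the subcomplex of $\partial(I\times I)$ on which $H$ is already simplicial. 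Then the bottom edge loop of $g$ differs from $\gamma$ only by moves of type (1) and (2), while the other three sides of $\partial(I\times I)$ are sent to $v$, so the remaining point is the purely combinatorial claim that the boundary edge loop of any simplicial map $g\colon L\to K$ with $L$ a triangulated disk is trivial in $E(K,\cdot)$. This I would prove by induction on the number of $2$-simplices of $L$: remove a $2$-simplex with an edge on $\partial L$; this replaces the boundary edge loop by an equivalent one, the modification being a type-(2) move across the image $2$-simplex of $K$ (degenerate when $g$ collapses that triangle onto an edge or vertex, in which case only type-(1) moves are needed) together with type-(1) moves deleting the vanished vertex. The base case, $L$ with no $2$-simplices, is a single vertex. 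Naturality of $\overline{\Phi}$ and of these constructions then gives $[\gamma]=1$.

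The hard part will be the injectivity step, and within it two pieces of bookkeeping: (a) checking that the bottom edge loop of $g$ is genuinely equivalent to $\gamma$ under the moves (1)--(2), which relies on a simplicial approximation of $\Phi(\gamma)$ taken relative to the already-simplicial bottom edge only subdividing and filling triangles; and (b) running the induction on the $2$-simplices of $L$ while correctly handling the degenerate images. (An alternative streamlining both directions is to produce, from a spanning tree $T$ of the $1$-skeleton, the presentation $\langle\,\overline{e}\ (e\in K^{(1)})\mid \overline{e}=1\ (e\in T),\ \overline{e}_{01}\overline{e}_{12}=\overline{e}_{02}\ (\{x_0,x_1,x_2\}\in K^{(2)})\,\rangle$ for both $E(K,v)$ --- directly from the definition and the moves (1), (2) --- and $\pi_1(|K|,v)$ --- by the standard computation of the fundamental group of the $2$-skeleton.) Since all of this is classical, in the paper itself we simply cite Section~3.6 of \cite{Spa}.
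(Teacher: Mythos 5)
The paper does not actually prove this lemma: it is stated as a classical fact with the proof ``abbreviated'' and a citation to Theorem 16 and Corollary 17 of Section 3.6 of Spanier. Your sketch is, in substance, exactly the classical argument being cited (simplicial approximation of loops via open stars for surjectivity, simplicial approximation of a nullhomotopy on a subdivided square for injectivity), so there is nothing to compare against in the paper itself; your proposal is a reasonable reconstruction of the omitted proof. One caveat on the injectivity step as you have set it up: the induction ``remove a $2$-simplex of $L$ with an edge on $\partial L$'' is more delicate than it looks, since deleting an arbitrary such triangle from a triangulated disk can pinch or disconnect the complex, so you would need to argue that a suitable (shellable/collapsible) order of removal exists, or reorganize the induction. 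The parenthetical alternative you mention --- computing both $E(K,v)$ and $\pi_1(|K|,v)$ from the presentation attached to a spanning tree of the $1$-skeleton, with relations from the $2$-simplices --- sidesteps this entirely and is closer to how Spanier actually organizes the proof; if you were to write the argument out in full, that route is the cleaner one.
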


Hence to prove Theorem 5.2, what we must prove is that $E(\mathcal{N}_r(G),v)$ is naturally isomorphic to $\pi_1^{2r}(G,v)$. First we construct a homomorphism $\overline{U} : E(\mathcal{N}_r(G),v) \rightarrow \pi_1^{2r}(G,v)_{\rm ev}$. For any edge loop $\gamma =(v_0,\cdots ,v_n)$ of $(\mathcal{N}_r(G),v)$, let a loop $U(\gamma):L_{2rn} \rightarrow G$ such that $U(\gamma)(2ri)=v_i$. Such $U(\gamma)$ is not unique but unique up to $(2r)$-homotopy. Moreover, if $\gamma \simeq \gamma'$ for loops of $(|K|,v)$, then it is easy to see that $U(\gamma) \simeq _{2r} U(\gamma')$ in $(G,v)$. Indeed, the case $\gamma$ and $\gamma'$ satisfy the condition (1) of the previous page, $U(\gamma) \simeq_{2r} U(\gamma')$ is obvious. Suppose that $\gamma$ and $\gamma'$ satisfy the condition (2). Then we can write $\gamma =(v_0,\cdots ,v_n)$ and $\gamma' =(v_0,\cdots ,v_{i-1},v_{i+1}, \cdots ,v_n)$ where $\{ v_{i-1},v_i,v_{i+1}\} \in \mathcal{N}_r(G)$. Then there is $w \in V(G)$ with $\{ v_{i-1},v_i,v_{i+1}\} \subset N_r(w)$. Then there is a graph map $\varphi :L_{2rn} \rightarrow G$ such that $\varphi(2rj)=v_j$ for $j=0,1,\cdots ,n$ and $\varphi(r(2i-1))=w=\varphi(r(2i+1))$ and $\varphi(2ri+j)=\varphi(2ri-j)$ for $j=0,1,\cdots ,r$. Then $U(\gamma) \simeq_{2r} \varphi$. Since $\varphi$ is $(2r)$-homotopic to a loop $\psi : L_{2r(n-1)} \rightarrow G$ such that $\psi(x)=\varphi(x)$ for $x \leq r(2i-1)$ and $\psi(x)=\varphi(x+2r)$ for $x \geq r(2i-1)$. Since $\psi (2rx)=v_x$ for $x \leq i$ and $\psi(2rx)=v_{x+1}$ for $x > i$, we have that $\psi \simeq_{2r} U(\gamma')$. Therefore we have that $U(\gamma) \simeq U(\gamma')$. Hence the map $\overline{U}:E(\mathcal{N}_r(G),v) \rightarrow \pi_1^{2r}(G,v)$ is well-defined. $\overline{U}$ is obviously a group homomorphism and natural.

Next we construct the inverse $\overline{V}: \pi_1^{2r}(G,v)_{ev} \rightarrow E(G,v)$. We write $L_2(G,v)$ for the set of all loops of $(G,v)$ with even length. For a loop $\gamma : L_{2m} \rightarrow G$ of $(G,v)$, we define an edge loop $V(\gamma)$ of $(\mathcal{N}_r(G),v)$ by $V(\gamma)=(\gamma (0),\gamma(2r),\cdots, \gamma(2r[m/r]),v)$, where $[-]$ is the Gauss symbol. Since $\gamma(2r(i-1)),\gamma (2ri) \in N_{r}(\gamma (r(2i-1)))$ and $\gamma (2r[m/r]),v \in N_{m-r[m/r]}(r[m/r]+m)$ and $m-r[m/r] \leq r$, we have that $V(\gamma)$ is an edge loop of $(\mathcal{N}_r(G),v)$. We want to show that $V$ induces a map $\overline{V} : \pi_1^{2r}(G,v)_{ev} \rightarrow E(G,v)$. Namely, it is sufficient to show that if $\gamma \simeq_{2r} \gamma'$, then $V(\gamma) \simeq V(\gamma')$. From the definition of $(2r)$-fundamental group, we can assume $\gamma$ and $\gamma'$ satisfy one of the following conditions. We write $l(\gamma)$ for the length of a path $\gamma$.

\begin{itemize}
\item[(i)] $l(\gamma')=l(\gamma)+2$ and there is $x \in \{ 1,2,\cdots, l(\gamma)-1\}$ such that $\gamma(i)=\gamma'(i)$ $(i\leq x)$ and $\gamma(i)=\gamma'(i+2)$ $(i\geq x)$.
\item[(ii)] $l(\gamma)=l(\gamma')$ and there is $x \in \{ 1,\cdots ,l(\gamma)\}$ such that $\gamma(i)=\gamma'(i)$ for any $i \neq x,x+1,\cdots ,x+r-2$.
\end{itemize}

First we consider the case (i). Let $\gamma : L_{2m} \rightarrow G$ and $\gamma' : L_{2m+2} \rightarrow G$ be loops of $(G,v)$. Then $V(\gamma) \simeq V(\gamma')$ since
$\{ \gamma (2ri),\gamma (2r(i+1)),\gamma' (2r(i+1))\} \in N_r(\gamma(r(2i+1)))$, $\{ \gamma(2ri),\gamma'(2ri),\gamma' (2r(i+1))\} \in N_r(\gamma'(r(2i+1)))$.

Next we consider the case (ii). we let loops $\gamma ,\gamma' :L_{2m} \rightarrow G$ and suppose there is $x \in \{ 0,1,\cdots ,2m-1\}$ such that $\gamma(i)=\gamma'(i)$ for any $i\neq x,x+1,\cdots ,x+r-2$. Since we prove the case (i), we can suppose that $r$ divides $m$ and $2r$ divides $x$ or $x-1$. The case $2r$ divides $x-1$, we have that $V(\gamma)=V(\gamma')$. Suppose that $2r$ divides $x$. Then $\{ \gamma (x),\gamma' (x),\gamma(x-2r) \} \in N_r(\gamma (x-r))$. What we must prove is that the edge path $(\gamma(x),\gamma'(x),\gamma(x+2r))$ is homotopic to the edge path $(\gamma(x),\gamma(x+2r))$. Since $\{ \gamma'(x),\gamma'(x+2r-2),\gamma(x+2r)\} \subset N_r(\gamma (x+r))$, we have that $(\gamma(x),\gamma'(x),\gamma(x+2r)) \simeq (\gamma(x),\gamma'(x),\gamma(x+2r-2),\gamma(x+2r))$. Since $\{ \gamma(x),\gamma'(x),\gamma(x+2r-2)\} \subset N_r(\gamma(x+r-1))$, we have that $(\gamma(x),\gamma'(x),\gamma(x+2r-2),\gamma(x+2r)) \simeq (\gamma(x),\gamma(x+2r-2),\gamma(x+2r))$. Since $\{ \gamma(x),\gamma(x+2r-2),\gamma(x+2r)\} \subset N_r(\gamma(x+r)),$ we have that $(\gamma(x),\gamma(x+2r-2),\gamma(x+2r))\simeq (\gamma(x),\gamma(x+2r))$. Hence we have that $(\gamma(x),\gamma'(x),\gamma(x+2r)) \simeq (\gamma(x),\gamma(x+2r))$. Therefore we have that $V$ induces a map $\overline{V}:\pi_1^{2r}(G,v) \rightarrow \pi_1(\mathcal{N}_r(G),v)$.

It is easy to see that $\overline{V}$ is the inverse of $\overline{U}$. This completes the proof of Theorem 5.2.

\end{document}